\documentclass[11pt]{article}
\usepackage[a4paper,margin=1in]{geometry}
\usepackage[T1]{fontenc}
\usepackage[utf8]{inputenc}
\usepackage{lmodern}
\usepackage{amsmath,amssymb,amsthm,mathtools,mathrsfs,bm}
\usepackage{enumitem}
\usepackage{hyperref}
\usepackage[nameinlink,capitalize]{cleveref}
\usepackage{microtype}
\hypersetup{colorlinks=true,linkcolor=blue,citecolor=blue,urlcolor=blue}
\newtheorem{theorem}{Theorem}[section]

\newtheorem{corollary}[theorem]{Corollary}

\theoremstyle{definition}
\newtheorem{definition}[theorem]{Definition}
\newtheorem{remark}[theorem]{Remark}
\newtheorem{example}[theorem]{Example}
\newcommand{\R}{\mathbb R}

\title{A Morrey-to-Lebesgue Equivalence for Convolution Calder\'on--Zygmund Operators}
\author{Rishad Shahmurov\thanks{Cellular Products Research and Development, 595 East Crossville Road, Roswell, GA 30075, USA; e-mail: rshahmurov@crimson.ua.edu} \and Veli Shahmurov\thanks{Corresponding author. Antalya Bilim University, Antalya, Turkey; e-mail: veli.sahmurov@antalya.edu.tr}}
\date{}

\begin{document}
\maketitle
\noindent\textbf{Running title.} Morrey--Lebesgue Equivalence\par
\medskip

\begin{abstract}
We prove that, for vector-valued convolution Calder\'on--Zygmund operators, boundedness on a single nontrivial Morrey space is equivalent to the corresponding global $L^p$ boundedness. Thus one Morrey scale already contains the full finite-$p$ boundedness information. The implication from Morrey to $L^p$ is obtained by a separated-copy amplification argument that reconstructs the global norm from a single scale-local estimate; the converse is proved in the same vector-valued framework by a local/far-field decomposition. As a consequence, boundedness of the vector-valued Hilbert transform on one nontrivial Morrey space is equivalent to the UMD property. The result shows that Morrey estimates do not bypass the classical Banach-space obstruction: they detect it exactly.
\end{abstract}

\noindent\textbf{Keywords.} Morrey spaces; Calder\'on--Zygmund operators; operator-valued kernels; UMD spaces; Hilbert transform.\\
\textbf{MSC 2020.} Primary 42B20, 46E40; Secondary 42B15, 47B38, 46B20

\section{Introduction}
Morrey spaces are designed to record local behavior at every scale, while Calder\'on--Zygmund theory is classically organized around global $L^p$ boundedness. The two theories are often connected in one direction: an $L^p$ singular-integral estimate is used to obtain a Morrey estimate. What has been missing, particularly in the vector-valued setting, is a theorem showing that a genuine Morrey estimate already contains the whole global $L^p$ bound.

This paper proves that the two boundedness theories are equivalent for translation-invariant convolution Calder\'on--Zygmund operators under the natural off-support kernel estimate. For every $1<p<\infty$ and every genuine Morrey parameter $0<\lambda<d$,
\[
T:L^p(\mathbb R^d;X)\to L^p(\mathbb R^d;Y)
\quad\Longleftrightarrow\quad
T:\mathcal M^{p,\lambda}(\mathbb R^d;X)\to\mathcal M^{p,\lambda}(\mathbb R^d;Y).
\]
Both implications are proved directly in the vector-valued setting. The $L^p$-to-Morrey direction follows from a scale-sensitive local/far-field decomposition. The reverse direction is more global: we place many far-separated copies of one test function, use translation invariance to reproduce the same principal output on every block, and let the Morrey normalization recover the global $L^p$ norm while the kernel decay suppresses cross-block interactions.

The equivalence has an immediate geometric consequence. By the Burkholder--Bourgain characterization~\cite{Burkholder1983,Bourgain1983}, boundedness of the vector-valued Hilbert transform on one nontrivial Morrey space is equivalent to the UMD property. Thus a single Morrey scale detects exactly the same Banach-space obstruction as classical $L^p$ theory.

Singular integrals on scalar Morrey spaces have a long history in elliptic regularity~\cite{Peetre1969Morrey,ChiarenzaFrasca1987,Lieberman2003}, and vector-valued Morrey multiplier theorems provide important sufficient conditions for particular operator classes. No prior result establishes the two-sided equivalence above at this level of vector-valued generality under the present convolution Calder\'on--Zygmund hypotheses. The point of the present paper is therefore not another lifting theorem, but a boundedness equivalence: local scale control and global $L^p$ control carry the same information for this class.
\section{Morrey lifting}\label{sec:morrey-lifting}

	We begin with the Lebesgue-to-Morrey half of the equivalence. Under the sole assumptions of $L^p$ boundedness and the off-support kernel-size estimate, the argument works directly for Banach-valued functions. It is a scale-sensitive local/far-field decomposition and introduces no additional geometric assumption on the Banach spaces. UMD, Fourier type, or $R$-boundedness may be needed in a separate theorem that produces the initial $L^p$ estimate, but they are not part of this passage from $L^p$ to Morrey control.

	\begin{definition}\label{def:spatial-morrey}
		Let $X$ be a Banach space, let $1\le p<\infty$, and let $0\le \lambda<d$.  We define
		\[
		\|f\|_{\mathcal M^{p,\lambda}(\R^d;X)}
		:=
		\sup_{B=B(x_0,r)\subset\R^d}
		r^{-\lambda/p}
		\left(\int_B\|f(x)\|_X^p\,dx\right)^{1/p}.
		\]
		The space $\mathcal M^{p,\lambda}(\R^d;X)$ consists of all strongly measurable functions for which this quantity is finite.  With this normalization, $\lambda=0$ corresponds to the global $L^p$ scale, while the endpoint $\lambda=d$ formally corresponds to $L^\infty$ by differentiation.  In the theorem below we assume $\lambda<d$; this is exactly the condition that makes the annular far-field series converge.
	\end{definition}

	We first prove the Lebesgue-to-Morrey implication.

\begin{theorem}\label{thm:morrey-lifting}
		Let $X$ and $Y$ be Banach spaces, let $1<p<\infty$, and let $0\le\lambda<d$.  Let $T$ be a linear operator that is initially defined on compactly supported simple $X$-valued functions on $\R^d$.  Assume the following two hypotheses.
		\begin{enumerate}[label=\textup{(\roman*)}]
			\item $T$ extends to a bounded operator
			\[
			T:L^p(\R^d;X)\longrightarrow L^p(\R^d;Y),
			\qquad
			\|Tf\|_{L^p(\R^d;Y)}\le M_p\|f\|_{L^p(\R^d;X)}.
			\]
			\item There exists a strongly measurable kernel
			\[
			K:\R^d\setminus\{0\}\longrightarrow\mathcal L(X,Y)
			\]
			which represents $T$ off the support of $f$ and satisfies the size estimate
			\[
			\|K(z)\|_{\mathcal L(X,Y)}\le C_K |z|^{-d},
			\qquad z\ne0.
			\]
			More precisely, whenever $f$ is compactly supported and $x\notin\operatorname{supp}f$,
			\[
			Tf(x)=\int_{\R^d}K(x-y)f(y)\,dy
			\]
			in the Bochner sense.
		\end{enumerate}
		Then $T$ is bounded on vector-valued Morrey spaces:
		\[
		T:\mathcal M^{p,\lambda}(\R^d;X)
		\longrightarrow
		\mathcal M^{p,\lambda}(\R^d;Y),
		\]
		and
		\[
		\|Tf\|_{\mathcal M^{p,\lambda}(\R^d;Y)}
		\le
		C(d,p,\lambda)\,(M_p+C_K)
		\|f\|_{\mathcal M^{p,\lambda}(\R^d;X)}.
		\]
		For general $f\in\mathcal M^{p,\lambda}(\R^d;X)$, $Tf$ is understood through the standard local Calder\'on--Zygmund definition
		\[
		Tf=T(f\mathbf 1_{2B})+
		\int_{(2B)^c}K(\cdot-y)f(y)\,dy
		\qquad\text{on }B,
		\]
		which is consistent in distributions for Fourier multiplier operators associated with the kernel $K$.
	\end{theorem}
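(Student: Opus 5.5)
Fix a ball $B=B(x_0,r)$ and $f\in\mathcal M^{p,\lambda}(\R^d;X)$. On $B$ I will use the local definition in the statement and split
\[
Tf=T(f\mathbf 1_{2B})+\int_{(2B)^c}K(\cdot-y)f(y)\,dy=:T f_1+F_2 ,
\]
where $f_1=f\mathbf 1_{2B}$. The goal is to show
\[
\Bigl(\int_B\|Tf\|_Y^p\Bigr)^{1/p}\lesssim (M_p+C_K)\,r^{\lambda/p}\|f\|_{\mathcal M^{p,\lambda}}
\]
with a constant independent of $x_0$ and $r$. Taking the supremum over $B$ then gives the theorem. Along the way I should check that the local definition does not depend on $B$. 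If $B\subset B'$, the two definitions differ on $B$ by $T(f\mathbf 1_{2B'\setminus 2B})$ minus the kernel integral over $2B'\setminus 2B$. These agree on $B$ by hypothesis (ii), since $B$ lies off the support of $f\mathbf 1_{2B'\setminus 2B}$. I would approximate that function in $L^p$ by compactly supported simple functions and pass to the limit using (i) together with local uniform convergence of the kernel integral.

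\textbf{Local term.} By (i),
\[
\|Tf_1\|_{L^p(B;Y)}\le M_p\|f\|_{L^p(2B;X)}\le M_p (2r)^{\lambda/p}\|f\|_{\mathcal M^{p,\lambda}}.
\]
This is immediate.

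\textbf{Far term.} Take $x\in B$ and $y\in 2^{k+1}B\setminus 2^kB$ with $k\ge1$. Then $|x-y|\ge |y-x_0|-r\ge 2^{k-1}r$, so the size estimate gives $\|K(x-y)\|\le C_K(2^{k-1}r)^{-d}$. The annular decomposition and Hölder's inequality then give
\[
\|F_2(x)\|_Y\le \sum_{k\ge1} C_K 2^{d}(2^kr)^{-d}\int_{2^{k+1}B}\|f\|_X
\lesssim C_K\sum_{k\ge1}(2^kr)^{-d}(2^{k+1}r)^{d/p'}(2^{k+1}r)^{\lambda/p}\|f\|_{\mathcal M^{p,\lambda}} .
\]
The $k$-th term is of order $(2^kr)^{(\lambda-d)/p}$. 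Because $\lambda<d$ the geometric series converges, and the sum is $\lesssim C_K r^{(\lambda-d)/p}\|f\|_{\mathcal M^{p,\lambda}}$. The same bound shows that the Bochner integral defining $F_2(x)$ converges absolutely. Integrating the pointwise bound over $B$, whose measure is $\sim r^d$, gives
\[
\|F_2\|_{L^p(B;Y)}\lesssim C_K r^{\lambda/p}\|f\|_{\mathcal M^{p,\lambda}} .
\]
Adding the two pieces proves the estimate with $C(d,p,\lambda)$ of the form $2^{\lambda/p}+c_d\sum_k 2^{k(\lambda-d)/p}$.

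\textbf{Main obstacle.} The estimates above are routine; the delicate point is well-definedness. One issue is strong measurability of $F_2$, which I would obtain from the strong measurability of $K$ and Fubini-type arguments for Bochner integrals. The other is the consistency claim, that the local formula is independent of $B$ and agrees with the $L^p$ extension when $f\in L^p\cap\mathcal M^{p,\lambda}$. I would handle it by the density argument sketched above, truncating $f\mathbf 1_{2B'\setminus2B}$ to compactly supported simple functions. Convergence of the off-support integral uses the kernel bound $|x-y|^{-d}\lesssim r^{-d}$, which holds uniformly for $x\in B$ and $y\notin 2B$. For Fourier multipliers, the distributional consistency would then follow from the same identity tested against Schwartz functions.
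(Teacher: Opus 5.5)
Your proposal is correct and follows the paper's proof essentially step for step: the same split $f=f\mathbf 1_{2B}+f\mathbf 1_{(2B)^c}$, the $L^p$ bound on the local piece, and the dyadic-annulus and H\"older estimate. That estimate yields the pointwise bound $C_K r^{-(d-\lambda)/p}\|f\|_{\mathcal M^{p,\lambda}}$ on $B$, and the series converges because $\lambda<d$. Your extra check that the local definition of $Tf$ does not depend on the chosen ball, by approximating $f\mathbf 1_{2B'\setminus 2B}$ with compactly supported simple functions, is a point the paper leaves implicit, and it is a welcome addition.
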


	\begin{proof}
		Let
		\[
		N:=\|f\|_{\mathcal M^{p,\lambda}(\R^d;X)}.
		\]
		It suffices to prove a uniform estimate on an arbitrary ball
		\[
		B=B(x_0,r).
		\]
		Write
		\[
		f=f_1+f_2,
		\qquad
		f_1:=f\mathbf 1_{2B},
		\qquad
		f_2:=f\mathbf 1_{(2B)^c},
		\]
		where $2B:=B(x_0,2r)$.

		The local part follows immediately from the $L^p$ boundedness of $T$:
		\[
		\begin{aligned}
		r^{-\lambda/p}\|Tf_1\|_{L^p(B;Y)}
		&\le
		r^{-\lambda/p}\|Tf_1\|_{L^p(\R^d;Y)} \\
		&\le
		M_p r^{-\lambda/p}\|f\|_{L^p(2B;X)} \\
		&\le
		C M_p N.
		\end{aligned}
		\]
		Here the last step uses the Morrey norm and the fact that the radius of $2B$ is $2r$.

		For the far part, let $x\in B$.  Since $f_2$ is supported outside $2B$, the off-support representation gives
		\[
		\|Tf_2(x)\|_Y
		\le
		\int_{(2B)^c}\|K(x-y)\|_{\mathcal L(X,Y)}\|f(y)\|_X\,dy.
		\]
		Decompose the complement into annuli
		\[
		A_k:=2^{k+1}B\setminus 2^kB,
		\qquad k=1,2,\dots.
		\]
		If $x\in B$ and $y\in A_k$, then
		\[
		|x-y|\ge c_d 2^k r.
		\]
		Therefore
		\[
		\begin{aligned}
		\|Tf_2(x)\|_Y
		&\le
		C C_K\sum_{k=1}^\infty (2^k r)^{-d}
		\int_{2^{k+1}B}\|f(y)\|_X\,dy .
		\end{aligned}
		\]
		By H\"older's inequality and the Morrey estimate,
		\[
		\begin{aligned}
		\int_{2^{k+1}B}\|f(y)\|_X\,dy
		&\le
		|2^{k+1}B|^{1-1/p}
		\|f\|_{L^p(2^{k+1}B;X)} \\
		&\le
		C (2^k r)^{d(1-1/p)}(2^k r)^{\lambda/p}N.
		\end{aligned}
		\]
		Hence
		\[
		\|Tf_2(x)\|_Y
		\le
		C C_K N
		\sum_{k=1}^\infty (2^k r)^{-d+d(1-1/p)+\lambda/p}.
		\]
		The exponent simplifies as
		\[
		-d+d(1-1/p)+\lambda/p
		=-\frac{d-\lambda}{p}.
		\]
		Since $\lambda<d$, the geometric series converges and gives
		\[
		\|Tf_2(x)\|_Y
		\le
		C(d,p,\lambda) C_K N r^{-(d-\lambda)/p}.
		\]
		This estimate is pointwise for $x\in B$.  Consequently
		\[
		\begin{aligned}
		\|Tf_2\|_{L^p(B;Y)}
		&\le
		|B|^{1/p}
		C(d,p,\lambda) C_K N r^{-(d-\lambda)/p} \\
		&\le
		C(d,p,\lambda) C_K N r^{\lambda/p}.
		\end{aligned}
		\]
		Multiplying by $r^{-\lambda/p}$ yields
		\[
		r^{-\lambda/p}\|Tf_2\|_{L^p(B;Y)}
		\le
		C(d,p,\lambda)C_K N.
		\]
		Combining the local and far estimates and taking the supremum over all balls $B$ proves the asserted Morrey bound.
	\end{proof}

	\begin{corollary}\label{cor:fourier-multiplier-morrey-lifting}
		Let $m:\R^d\setminus\{0\}\to\mathcal L(X,Y)$ be an operator-valued multiplier, and let $T_mf=\mathcal F^{-1}(m\widehat f)$ on Schwartz functions.  Suppose that, for some $1<p<\infty$, an $L^p$ multiplier theorem gives
		\[
		T_m:L^p(\R^d;X)\to L^p(\R^d;Y)
		\]
		boundedly.  Suppose moreover that the distribution kernel $K=\mathcal F^{-1}m$ agrees away from the origin with a strongly measurable function satisfying
		\[
		\|K(x)\|_{\mathcal L(X,Y)}\le C_K |x|^{-d},
		\qquad x\ne0.
		\]
		Then, for every $0\le\lambda<d$,
		\[
		T_m:\mathcal M^{p,\lambda}(\R^d;X)
		\to
		\mathcal M^{p,\lambda}(\R^d;Y)
		\]
		boundedly.
	\end{corollary}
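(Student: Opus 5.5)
The corollary should follow directly from Theorem~\ref{thm:morrey-lifting} once we check its two hypotheses for $T=T_m$. The plan has three steps.

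First, we need $T_m$ defined on compactly supported simple $X$-valued functions, since the theorem starts from that class. The $L^p$ multiplier theorem gives boundedness on Schwartz functions $\mathcal S(\R^d;X)$. Because $\mathcal S(\R^d)\otimes X$ is dense in $L^p(\R^d;X)$ for $p<\infty$, $T_m$ extends uniquely to a bounded operator on $L^p$ with norm $M_p$. Compactly supported simple functions lie in $L^p$, so they are acted on by this extension. This gives hypothesis (i).

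Second, we verify the off-support representation in hypothesis (ii). Let $f$ be compactly supported and simple, and let $x_1\notin\operatorname{supp}f$. Choose $\delta>0$ with $\operatorname{dist}(x_1,\operatorname{supp}f)>2\delta$. Mollify: $f_\varepsilon=f*\varphi_\varepsilon\in C_c^\infty(\R^d)\otimes X$, with support inside $\{\operatorname{dist}(\cdot,\operatorname{supp}f)<\delta\}$ for small $\varepsilon$. For Schwartz $f_\varepsilon$ we have the distributional identity $T_mf_\varepsilon=K*f_\varepsilon$. For a test function $\psi$ supported in $B(x_1,\delta)$, the pairing $\langle K*f_\varepsilon,\psi\rangle$ only involves $K$ on the region $|z|>\delta$. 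There $K$ is the given function, so the pairing equals $\int\!\int K(x-y)f_\varepsilon(y)\psi(x)\,dy\,dx$, which is absolutely convergent because $\|K(z)\|\le C_K\delta^{-d}$ on that region. We then pass to the limit $\varepsilon\to0$. On the left, $T_mf_\varepsilon\to T_mf$ in $L^p$. On the right, $f_\varepsilon\to f$ in $L^1$ with supports in a fixed compact set, and $K(x-\cdot)$ is bounded there uniformly for $x\in B(x_1,\delta)$. Hence $T_mf(x)=\int K(x-y)f(y)\,dy$ for a.e.\ $x$ near $x_1$, as a Bochner integral. This is hypothesis (ii).

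The main obstacle is the distributional identity $T_mf_\varepsilon=K*f_\varepsilon$ for operator-valued $K$. We will justify it by testing against $y^*\in Y^*$ and $x\in X$. This reduces it to the scalar identity for the scalar multiplier $\langle y^*,m(\cdot)x\rangle$, whose inverse Fourier transform is $\langle y^*,Kx\rangle$. The identity then transfers to elementary tensors in $\mathcal S\otimes X$, which is all the mollified simple functions require.

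With (i) and (ii) verified, Theorem~\ref{thm:morrey-lifting} gives the Morrey bound with constant $C(d,p,\lambda)(M_p+C_K)$. Here $T_m$ on $\mathcal M^{p,\lambda}$ is understood through the local Calder\'on--Zygmund definition stated in the theorem.
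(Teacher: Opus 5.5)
Your proposal is correct and follows the paper's route: the paper proves the corollary in one line, as \Cref{thm:morrey-lifting} applied to $T_m$ and $K$, and you spell out the hypothesis checks it leaves implicit (extending $T_m$ from Schwartz functions to simple functions via the $L^p$ bound, and obtaining the off-support representation by mollification and testing against $y^*\in Y^*$, $x\in X$). One cosmetic fix: with $\operatorname{supp}f_\varepsilon\subset\{\operatorname{dist}(\cdot,\operatorname{supp}f)<\delta\}$ and $\operatorname{supp}\psi\subset B(x_1,\delta)$ you only get $|x-y|>0$, so take $\varepsilon<\delta/2$ and $\operatorname{supp}\psi\subset B(x_1,\delta/2)$, which gives $|x-y|>\delta$.
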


	\begin{proof}
		This is exactly \Cref{thm:morrey-lifting} applied to the Fourier multiplier operator $T_m$ and its kernel $K$.
	\end{proof}

	\subsection{Morrey--Lebesgue equivalence for convolution Calder\'on--Zygmund operators}\label{subsec:morrey-lp-equivalence}

	The first half is now complete. We turn to the opposite implication. For translation-invariant convolution Calder\'on--Zygmund operators, a single genuine Morrey estimate already recovers the global $L^p$ estimate. Together the two results give the equivalence.

	We now prove the full Morrey--Lebesgue equivalence.

\begin{theorem}\label{thm:morrey-lp-equivalence-cz}
		Let $X$ and $Y$ be Banach spaces, let $1<p<\infty$, and fix $0<\lambda<d$.  Let $T$ be a linear operator initially defined on compactly supported simple $X$-valued functions on $\R^d$.  Assume that $T$ is translation invariant, in the sense that
		\[
		T(\tau_a f)=\tau_a(Tf),\qquad a\in\R^d,
		\]
		where $(\tau_a f)(x)=f(x-a)$.  Assume also that $T$ has an off-support convolution kernel representation
		\[
		Tf(x)=\int_{\R^d}K(x-y)f(y)\,dy,
		\qquad x\notin \operatorname{supp}f,
		\]
		with
		\[
		K:\R^d\setminus\{0\}\to\mathcal L(X,Y),
		\qquad
		\|K(z)\|_{\mathcal L(X,Y)}\le C_K |z|^{-d}.
		\]
		Then the following are equivalent:
		\begin{enumerate}[label=\textup{(\roman*)}]
			\item $T$ extends boundedly from $L^p(\R^d;X)$ to $L^p(\R^d;Y)$;
			\item $T$ extends boundedly from $\mathcal M^{p,\lambda}(\R^d;X)$ to $\mathcal M^{p,\lambda}(\R^d;Y)$.
		\end{enumerate}
		More precisely, \textup{(i)} implies \textup{(ii)} by \Cref{thm:morrey-lifting}, while \textup{(ii)} implies \textup{(i)} by the separated-copy amplification argument below. Consequently, boundedness on a single scale $\mathcal M^{p,\lambda_0}$ with $0<\lambda_0<d$ forces boundedness on $L^p$ and therefore, by the lifting theorem, on every $\mathcal M^{p,\mu}$ with $0\le\mu<d$.
	\end{theorem}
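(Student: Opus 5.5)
The direction (i)$\Rightarrow$(ii) is exactly \Cref{thm:morrey-lifting}: its hypotheses are $L^p$ boundedness and the off-support size estimate, and both hold here. All the work is therefore in (ii)$\Rightarrow$(i). Let $A$ be the Morrey operator norm. Fix a compactly supported simple $f$ with $\operatorname{supp} f\subset B(0,R)$, and fix a large radius $\rho\gg R$. The goal is
\[
\|Tf\|_{L^p(B(0,\rho);Y)}\le A\|f\|_{L^p(X)}+o(1)
\qquad\text{as the amplification parameter grows},
\]
after which we let $\rho\to\infty$. Since simple functions are dense in $L^p$, this gives (i) with $M_p\le A$, up to a dimensional constant.

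\textbf{Construction.} Choose $N$ points $a_1,\dots,a_N$ on a lattice with spacing $L\gg\rho$, and set $F=\sum_j\tau_{a_j}f$.
\begin{itemize}
\item \emph{Morrey norm of the input.} Since the copies have disjoint supports, a ball of radius $s$ meets at most about $1+(s/L)^d$ copies. Hence
\[
\|F\|_{\mathcal M^{p,\lambda}}^p\lesssim \sup_s s^{-\lambda}\min\{N,1+(s/L)^d\}\,\|f\|_p^p+(\text{single-copy small-scale term}).
\]
The single-copy term is $\sup_{s} s^{-\lambda}\int_{B_s}\|f\|^p$, which for fixed simple $f$ is a finite constant $c_f$ with no dependence on $N$ or $L$.
\item \emph{Choice of parameters.} Take $N=m^d$ copies in a cube of side $mL$. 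The supremum is then attained near $s\sim mL$ and equals about $(mL)^{-\lambda}m^d\|f\|_p^p$.
\item \emph{Morrey norm of the output.} On a ball of radius comparable to $mL$ containing all copies, linearity and translation invariance give
\[
TF=\sum_j\tau_{a_j}(Tf).
\]
On each block $B(a_j,\rho)$ this equals $\tau_{a_j}(Tf)$ plus the cross terms $\sum_{i\ne j}\tau_{a_i}Tf$. For $x$ in $B(a_j,\rho)$ the cross terms are evaluated off the support of the other copies, so by the kernel bound each is at most $C_K|a_i-a_j|^{-d}\|f\|_{L^1}$.
\end{itemize}
Applying the Morrey hypothesis on that ball gives
\[
(mL)^{-\lambda/p}\Big(m^d\|Tf\|_{L^p(B_\rho)}^p\Big)^{1/p}-\text{(cross error)}\le A\,\|F\|_{\mathcal M^{p,\lambda}}.
\]
Dividing by the common factor $(m^d(mL)^{-\lambda})^{1/p}$ then produces the target inequality.

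\textbf{Main obstacle.} The hard step is controlling the cross-block error, because the lattice sum $\sum_{i\ne j}|a_i-a_j|^{-d}$ over $m^d$ points diverges like $L^{-d}\log m$. The pointwise cross error on each block is therefore about $C_K\|f\|_1L^{-d}\log m$. Its $L^p(B(a_j,\rho))$ norm is about $\rho^{d/p}L^{-d}\log m$, and this must be small compared with $\|Tf\|_{L^p(B_\rho)}$. I would first fix $m$ and then send $L\to\infty$; this kills the error since $\log m$ is fixed.

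With $m$ fixed, however, the input Morrey norm also contains the competing term $\sup_{s\le L}s^{-\lambda}\|f\|_p^p\approx c_f$. This term is not small compared with $m^{d-\lambda}L^{-\lambda}\|f\|^p_p$ unless $L$ is small, so the two requirements pull in opposite directions. To reconcile them I would rescale instead of making $L$ large. Set $f_\delta(x)=f(x/\delta)$, which is legitimate because the kernel is homogeneous-size and $T$ is a convolution operator. Its Morrey norm at scales $\ge\delta R$ behaves like $L^p$ over $s^\lambda$, while the $L^p$ norms scale by $\delta^{d/p}$. Choosing $\delta\to0$ with $m,L$ fixed makes the single-copy term negligible relative to the many-copy term (exponent $d-\lambda>0$), and choosing $L/\delta\to\infty$ makes the cross error negligible. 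The remaining issue is the order of limits: $\delta\to0$ first with $L/\delta$ large, then $m\to\infty$ only if needed. That $\lambda>0$ is essential here, since for $\lambda=0$ the normalization gives no gain from many copies. This balancing of the single-copy Morrey term against the cross-interaction decay is the step I expect to require the most care.
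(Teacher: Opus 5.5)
There is a genuine gap in your final step. The setup is the paper's own: separated translates, counting copies per ball to bound the input Morrey norm, translation invariance plus the kernel bound for cross terms, and the $\log$ lattice sum. But the rescaling you use to reconcile the single-copy term $c_f=\|f\|^p_{\mathcal M^{p,\lambda}}$ with the cross error fails, for two reasons.

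\emph{It is not available.} The hypotheses give translation invariance and a size bound $\|K(z)\|\le C_K|z|^{-d}$, not homogeneity or dilation invariance. For example, $(I-\Delta)^{-1}$ with its Bessel kernel satisfies every hypothesis. So $T f_\delta\neq (Tf)(\cdot/\delta)$ in general, and an estimate for $f_\delta$ says nothing about $Tf$.

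\emph{It would not help even for a dilation-invariant $T$.} Morrey norms are dilation covariant. The configuration built from $f_\delta$ with spacing $L$ is the $\delta$-dilate of the one built from $f$ with spacing $L/\delta$. Concretely, $c_{f_\delta}=\delta^{d-\lambda}c_f$, while the many-copy term is $m^{d-\lambda}L^{-\lambda}\delta^{d}\|f\|_p^p$. Their ratio is $m^{d-\lambda}(\delta/L)^{\lambda}\|f\|_p^p/c_f\to0$ as $\delta\to0$, so the single-copy term becomes \emph{dominant}, the opposite of what you claim. Sending $\delta\to0$ with $L$ fixed is exactly the bad regime (spacing $\to\infty$, $m$ fixed) that you had already identified.

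The missing observation is that there is no real tension, because you never need to fix $m$. After dividing by $(m^{d}(mL)^{-\lambda})^{1/p}$, your inequality reads
\[
\|Tf\|_{L^p(B(0,\rho);Y)}\le C A\Bigl(\|f\|_{L^p(\R^d;X)}+c_f^{1/p}\bigl(m^{d-\lambda}L^{-\lambda}\bigr)^{-1/p}\Bigr)+C\,C_K\,\rho^{d/p}\|f\|_{L^1}L^{-d}\log m .
\]
Both error terms vanish if $m$ and $L$ tend to infinity together with $m\simeq L^{a}$ and $a>\lambda/(d-\lambda)$:
\[
m^{d-\lambda}L^{-\lambda}\simeq L^{a(d-\lambda)-\lambda}\to\infty,
\qquad
L^{-d}\log m\simeq aL^{-d}\log L\to0 .
\]
The logarithmic divergence of the lattice sum in $m$ is harmless against the polynomial decay $L^{-d}$ in the spacing. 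This is precisely the paper's choice $L\simeq S^a$; the paper's $L$ is your $m$, and its $S$ is your $L$. With this joint limit in place of the rescaling paragraph, your argument becomes the paper's proof.
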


	\begin{proof}
		Only the implication \textup{(ii)} $\Rightarrow$ \textup{(i)} remains to prove.  Let $C_M$ be the Morrey operator norm of $T$ on $\mathcal M^{p,\lambda}$.  It is enough to prove the $L^p$ estimate on compactly supported simple functions, because these are dense in $L^p(\R^d;X)$.

		Fix a compactly supported simple function $f$ and put
		\[
		g:=Tf,
		\qquad
		A:=\|f\|_{L^p(\R^d;X)},
		\qquad
		B:=\|f\|_{\mathcal M^{p,\lambda}(\R^d;X)}.
		\]
		Let $Q\subset\R^d$ be a fixed bounded ball.  We shall prove
		\[
		\|g\|_{L^p(Q;Y)}\le C C_M A,
		\]
		with $C$ independent of $f$ and $Q$.  Taking an increasing sequence of balls $Q\uparrow\R^d$ will then give the global $L^p$ estimate.

		Choose $S>1$ large and choose an integer $L\ge2$.  Let
		\[
		\Lambda_L:=\{1,2,\dots,L\}^d,
		\qquad
		a_k:=S k,
		\qquad k\in\Lambda_L,
		\]
		and define the separated block function
		\[
		F_{L,S}:=\sum_{k\in\Lambda_L}\tau_{a_k}f.
		\]
		For $S$ sufficiently large, depending only on the diameters of $\operatorname{supp}f$ and $Q$, the translated supports $a_k+\operatorname{supp}f$ are disjoint and the translated test regions $a_k+Q$ are disjoint.

		We first estimate the Morrey norm of $F_{L,S}$.  Let $B(x,r)$ be an arbitrary ball.  If $r<S/10$, the ball intersects only a bounded number, depending only on $d$, of the translated supports, and hence
		\[
		r^{-\lambda/p}\|F_{L,S}\|_{L^p(B(x,r);X)}\le C_d B.
		\]
		If $r\ge S/10$, the number of translated supports intersected by $B(x,r)$ is at most
		\[
		C_d\min\{L^d,(1+r/S)^d\}.
		\]
		Therefore
		\[
		r^{-\lambda/p}\|F_{L,S}\|_{L^p(B(x,r);X)}
		\le
		C_d r^{-\lambda/p}\min\{L^d,(1+r/S)^d\}^{1/p}A.
		\]
		Since $0<\lambda<d$, the right-hand side is bounded by
		\[
		C_{d,p,\lambda} L^{(d-\lambda)/p}S^{-\lambda/p}A.
		\]
		Consequently
		\[
		\|F_{L,S}\|_{\mathcal M^{p,\lambda}(\R^d;X)}
		\le
		C_{d,p,\lambda}\bigl(B+G_{L,S}A\bigr),
		\qquad
		G_{L,S}:=L^{(d-\lambda)/p}S^{-\lambda/p}.
		\]
		Choose now $L=L(S)$ so that $L\simeq S^a$ with
		\[
		a>\frac{\lambda}{d-\lambda}.
		\]
		Then $G_{L,S}\to\infty$ as $S\to\infty$.  Hence, for $S$ large enough,
		\[
		\|F_{L,S}\|_{\mathcal M^{p,\lambda}(\R^d;X)}
		\le
		C_{d,p,\lambda}G_{L,S}A.
		\]

		Next we obtain the corresponding lower bound for $TF_{L,S}$.  By translation invariance,
		\[
		TF_{L,S}=\sum_{k\in\Lambda_L}\tau_{a_k}g
		\]
		in distributions.  On the region $a_k+Q$, the term $\tau_{a_k}g$ is the principal term.  For $\ell\ne k$ and $x\in a_k+Q$, the point $x$ lies outside $a_\ell+\operatorname{supp}f$ when $S$ is large, so the kernel representation gives
		\[
		\|T(\tau_{a_\ell}f)(x)\|_Y
		\le
		C_K\int_{\R^d}\frac{\|f(y)\|_X}{|a_k-a_\ell+q-y|^d}\,dy,
		\qquad q:=x-a_k\in Q.
		\]
		After increasing $S$ if necessary, the denominator is bounded below by $cS|k-\ell|$ uniformly in $q\in Q$ and $y\in\operatorname{supp}f$.  Hence
		\[
		\sum_{\ell\ne k}\|T(\tau_{a_\ell}f)(x)\|_Y
		\le
		C C_K\|f\|_{L^1(\R^d;X)}S^{-d}
		\sum_{0<|m|\le d^{1/2}L}|m|^{-d}
		\le
		C C_K\|f\|_{L^1}S^{-d}\log(2L).
		\]
		With $L\simeq S^a$, this error tends to zero as $S\to\infty$.  Therefore, for $S$ large enough,
		\[
		\|TF_{L,S}\|_{L^p(\bigcup_{k\in\Lambda_L}(a_k+Q);Y)}
		\ge
		\frac12 L^{d/p}\|g\|_{L^p(Q;Y)}.
		\]
		Let $B_{L,S}$ be a ball containing $\bigcup_{k\in\Lambda_L}(a_k+Q)$ with radius at most $C LS$.  Then
		\[
		\begin{aligned}
		\|TF_{L,S}\|_{\mathcal M^{p,\lambda}(\R^d;Y)}
		&\ge
		(CLS)^{-\lambda/p}
		\|TF_{L,S}\|_{L^p(B_{L,S};Y)} \\
		&\ge
		c_{d,p,\lambda}G_{L,S}\|g\|_{L^p(Q;Y)}.
		\end{aligned}
		\]

		Applying the assumed Morrey boundedness of $T$ and the preceding input estimate yields
		\[
		cG_{L,S}\|g\|_{L^p(Q;Y)}
		\le
		\|TF_{L,S}\|_{\mathcal M^{p,\lambda}}
		\le
		C_M\|F_{L,S}\|_{\mathcal M^{p,\lambda}}
		\le
		C C_M G_{L,S}A.
		\]
		Cancelling $G_{L,S}>0$ gives
		\[
		\|Tf\|_{L^p(Q;Y)}\le C C_M\|f\|_{L^p(\R^d;X)}.
		\]
		Since $Q$ is arbitrary, $Tf\in L^p(\R^d;Y)$ and the desired $L^p$ estimate follows.
	\end{proof}

	\begin{remark}\label{rem:scope-reverse-morrey-lp}
		The converse implication in \Cref{thm:morrey-lp-equivalence-cz} uses three structural hypotheses in an essential way.  First, the operator is considered on the full space $\R^d$, so that arbitrarily large separated blocks can be inserted.  Second, translation invariance identifies the output of each block with a translate of the same function $Tf$.  Third, the off-support estimate $\|K(z)\|\lesssim |z|^{-d}$ makes the interaction between distant blocks negligible, up to the harmless logarithmic summation of the borderline kernel.  Thus the theorem should not be read as a converse for arbitrary Calder\'on--Zygmund operators on bounded domains or for variable-kernel operators without translation invariance.  It is a converse for global convolution-type Calder\'on--Zygmund operators.
	\end{remark}

	\begin{corollary}\label{cor:morrey-hilbert-umd}
		Let $X$ be a Banach space, let $1<p<\infty$, and let $0<\lambda<1$.  Then the following are equivalent:
		\begin{enumerate}[label=\textup{(\roman*)}]
			\item $X$ is UMD;
			\item the Hilbert transform is bounded on $L^p(\R;X)$;
			\item the Hilbert transform is bounded on $\mathcal M^{p,\lambda}(\R;X)$ for one $0<\lambda<1$;
			\item the Hilbert transform is bounded on $\mathcal M^{p,\lambda}(\R;X)$ for every $0\le\lambda<1$.
		\end{enumerate}
	\end{corollary}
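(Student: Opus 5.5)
The plan is to close the cycle of implications \textup{(i)}$\Rightarrow$\textup{(ii)}$\Rightarrow$\textup{(iv)}$\Rightarrow$\textup{(iii)}$\Rightarrow$\textup{(ii)}. Everything specific to the Hilbert transform reduces to the classical theory plus \Cref{thm:morrey-lp-equivalence-cz}, applied with $d=1$ and $X=Y$. The equivalence \textup{(i)}$\Leftrightarrow$\textup{(ii)} is the Burkholder--Bourgain theorem~\cite{Burkholder1983,Bourgain1983}, and I will cite it directly. Its content is independent of $p\in(1,\infty)$, so it holds for the fixed $p$ in the statement.

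Before using \Cref{thm:morrey-lp-equivalence-cz}, I must check that the Hilbert transform satisfies its structural hypotheses on $\R$. I take $H$ to be defined on compactly supported simple $X$-valued functions by the principal value $Hf(x)=\frac1\pi\,\mathrm{p.v.}\int f(y)(x-y)^{-1}\,dy$. For such $f$ this is a finite sum of scalar Hilbert transforms of indicator functions of intervals, tensored with vectors of $X$, so it is well defined for every $X$ whether or not $X$ is UMD. Translation invariance is immediate from the formula. The off-support representation holds with $K(z)=\frac{1}{\pi z}\,\mathrm{Id}_X$, since away from the support the integral is absolutely convergent. This kernel gives $\|K(z)\|_{\mathcal L(X)}=\frac{1}{\pi}|z|^{-1}$, which is the required size bound with $d=1$ and $C_K=1/\pi$.

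With the hypotheses verified, the Morrey implications follow quickly. For \textup{(ii)}$\Rightarrow$\textup{(iv)}, \Cref{thm:morrey-lifting} applied with $d=1$ gives boundedness on $\mathcal M^{p,\mu}$ for every $0\le\mu<1$; the case $\mu=0$ is the $L^p$ hypothesis itself. The step \textup{(iv)}$\Rightarrow$\textup{(iii)} is trivial. For \textup{(iii)}$\Rightarrow$\textup{(ii)}, I apply \Cref{thm:morrey-lp-equivalence-cz} with the given $\lambda\in(0,1)$ to obtain the $L^p$ bound.

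The main point needing care is what ``bounded on $\mathcal M^{p,\lambda}$'' means in \textup{(iii)} when $X$ is not yet known to be UMD. I will read it as an a priori estimate $\|Hf\|_{\mathcal M^{p,\lambda}}\le C\|f\|_{\mathcal M^{p,\lambda}}$ on compactly supported simple functions. That is exactly the input used in the proof of \Cref{thm:morrey-lp-equivalence-cz}, since the separated block functions $F_{L,S}$ are themselves compactly supported simple functions. With this reading, no circularity arises from the extension procedure. The hypothesis $0<\lambda<1$ in the statement is needed only for \textup{(iii)}: the amplification argument requires $\lambda>0$, whereas $\lambda=0$ is simply \textup{(ii)}.
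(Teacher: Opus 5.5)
Your proposal is correct and follows the paper's proof essentially verbatim: \textup{(i)}$\Leftrightarrow$\textup{(ii)} by Burkholder--Bourgain, \textup{(ii)}$\Rightarrow$\textup{(iv)} by \Cref{thm:morrey-lifting} with $d=1$, \textup{(iv)}$\Rightarrow$\textup{(iii)} trivially, and \textup{(iii)}$\Rightarrow$\textup{(ii)} by \Cref{thm:morrey-lp-equivalence-cz} in dimension one. You verify translation invariance and the kernel $K(z)=(\pi z)^{-1}\mathrm{Id}_X$ explicitly, and you read \textup{(iii)} as an a priori estimate on compactly supported simple functions, which is exactly what the separated-copy argument uses and the only sensible meaning before $X$ is known to be UMD; this makes explicit what the paper leaves implicit (one minor slip: simple functions are built from indicators of finite-measure measurable sets, not only intervals, which changes nothing).
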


	\begin{proof}
		The equivalence of \textup{(i)} and \textup{(ii)} is the Burkholder--Bourgain characterization of UMD spaces by boundedness of the vector-valued Hilbert transform \cite{Burkholder1983,Bourgain1983}.  The implication \textup{(ii)} $\Rightarrow$ \textup{(iv)} follows from \Cref{thm:morrey-lifting}, since the Hilbert kernel satisfies $|x|^{-1}$ decay.  The implication \textup{(iv)} $\Rightarrow$ \textup{(iii)} is immediate, and \textup{(iii)} $\Rightarrow$ \textup{(ii)} is \Cref{thm:morrey-lp-equivalence-cz} in dimension one.
	\end{proof}

	\begin{remark}\label{rem:mihlin-hormander-lizorkin-morrey}
		Any scalar or operator-valued Fourier multiplier theorem that supplies an $L^p$ estimate and a Calder\'on--Zygmund kernel size estimate immediately yields a Morrey multiplier theorem by \Cref{thm:morrey-lifting}.  Thus Mikhlin, H\"ormander and Lizorkin type multiplier theorems have Morrey-space versions whenever their hypotheses imply both the underlying $L^p$ boundedness and the standard off-support kernel decay.  Conversely, within the translation-invariant convolution Calder\'on--Zygmund class, \Cref{thm:morrey-lp-equivalence-cz} shows that boundedness on a single nontrivial Morrey scale is equivalent to the corresponding $L^p$ boundedness.  The Morrey formulation is therefore not a way to evade classical multiplier obstructions such as UMD; rather, it is a scale-localized formulation of the same boundedness theory.
	\end{remark}

		\begin{remark}\label{rem:operators-covered-equivalence}
		The equivalence theorem applies to any global translation-invariant convolution operator satisfying the stated off-support size estimate.  This includes the Hilbert transform, the Riesz transforms, the Beurling--Ahlfors transform, smooth homogeneous Fourier multipliers of order zero whose kernels satisfy the Calder\'on--Zygmund decay, and operator-valued Fourier multipliers once the off-support kernel estimate is available.  In the parabolic direction it also applies to time-convolution maximal-regularity multipliers such as $i\xi(i\xi+A)^{-1}$ and $A(i\xi+A)^{-1}$ whenever their kernels satisfy the one-dimensional Calder\'on--Zygmund size estimate.  It does not automatically apply to variable-kernel Calder\'on--Zygmund operators, commutators, boundary singular integrals, or fractional-integral mappings, where translation invariance, same-exponent scaling, or the borderline $|x|^{-d}$ interaction may fail.
	\end{remark}

\subsection{Local PDE mechanisms behind Morrey estimates}\label{subsec:local-pde-morrey-mechanisms}

The equivalence theorem above has two complementary interpretations.  On the one hand, for global convolution Calder\'on--Zygmund operators, Morrey boundedness is not a weaker replacement for $L^p$ boundedness: one nontrivial Morrey scale already contains the $L^p$ obstruction.  On the other hand, in PDE applications the Morrey formulation is often the more natural estimate to prove.  Local regularity arguments rarely produce global information first.  They usually begin with scale-local estimates on balls or cylinders, for instance
\[
        \int_{B_r(x_0)} |f(x)|^p\,dx\le C r^\lambda,
        \qquad 0<r<r_0,
\]
or, in parabolic problems,
\[
        \int_{Q_r(t_0,x_0)} |f(t,x)|^p\,dxdt\le C r^\lambda .
\]
Such estimates are precisely Morrey estimates.  The Morrey norm records how the $L^p$ mass of the data scales on every ball.  The Calder\'on--Zygmund lifting theorem then converts this local control of the data into local control of the singular-integral quantities appearing in elliptic, parabolic, Stokes, and pressure estimates.  Morrey spaces do not replace the $L^p$ theory; rather, they encode the local scale information in which many PDE estimates are naturally obtained.

\begin{example}\label{ex:poisson-morrey-cz}
Consider the model equation on $\R^d$, $d\ge2$,
\[
        -\Delta u=f .
\]
Formally,
\[
        \partial_i\partial_j u = R_iR_j f,
\]
where $R_i$ denotes the $i$th Riesz transform.  The operators $R_iR_j$ are convolution Calder\'on--Zygmund operators.  Therefore, if
\[
        f\in \mathcal M^{p,\lambda}(\R^d),
        \qquad 1<p<\infty,\quad 0\le\lambda<d,
\]
then every second derivative belongs to the same Morrey space.  Equivalently,
\[
        D^2u=(\partial_i\partial_j u)_{1\le i,j\le d}
        \in \mathcal M^{p,\lambda}(\R^d;\R^{d\times d}),
\]
where the matrix-valued Morrey norm may be taken, for instance, as
\[
        \|D^2u\|_{\mathcal M^{p,\lambda}(\R^d;\R^{d\times d})}
        :=
        \sum_{i,j=1}^d
        \|\partial_i\partial_j u\|_{\mathcal M^{p,\lambda}(\R^d)} .
\]
Moreover,
\[
        \|D^2u\|_{\mathcal M^{p,\lambda}(\R^d;\R^{d\times d})}
        \lesssim
        \|f\|_{\mathcal M^{p,\lambda}(\R^d)} .
\]
This is often more informative than the global estimate $D^2u\in L^p$.  If the forcing is locally concentrated but has controlled scale growth, the Morrey estimate keeps track of exactly that local behavior.  Moreover, if the exponent is strong enough to cross the Morrey--Campanato threshold, namely if
\[
        \lambda>d-p,
\]
then applying the Morrey--Campanato lemma to $v=\nabla u$ gives
\[
        \nabla u\in C^{0,\alpha}_{\rm loc},
        \qquad
        \alpha=1-\frac{d-\lambda}{p}>0.
\]
Thus the workflow is
\[
        f\in \mathcal M^{p,\lambda}
        \Longrightarrow
        D^2u\in \mathcal M^{p,\lambda}
        \Longrightarrow
        \nabla u\in C^{0,\alpha}_{\rm loc}
\]
whenever $\lambda>d-p$.  This basic example illustrates how the Morrey scale connects singular-integral estimates directly to pointwise regularity.
\end{example}

\begin{example}\label{ex:vmo-nondivergence-morrey}
Let
\[
        Lu=a^{ij}(x)D_{ij}u=f
\]
be a uniformly elliptic non-divergence form equation.  For constant coefficients the second derivatives are Calder\'on--Zygmund transforms of $f$.  For VMO coefficients one freezes the coefficients, uses perturbative $L^p$ estimates, and then localizes.  The practical local conclusion has the form
\[
        \int_{B_r(x_0)} |D^2u|^p\,dx
        \le
        C r^\lambda
\]
provided the data satisfy
\[
        \int_{B_r(x_0)} |f|^p\,dx\le C r^\lambda
\]
and lower-order/local remainder terms are controlled.  In other words,
\[
        f\in \mathcal M^{p,\lambda}_{\rm loc}
        \Longrightarrow
        D^2u\in \mathcal M^{p,\lambda}_{\rm loc}.
\]
This is exactly the philosophy of the Campanato--Lieberman approach \cite{Lieberman2003}: once the correct local $L^p$ estimate is available, the passage to Morrey estimates is an elementary localization and iteration argument.  The Morrey statement is more practical than a global $L^p$ statement because the equation is local, boundary flattening is local, and coefficient oscillation is measured locally.  Thus the relevant object is not merely the size of $f$ on the whole domain, but how the $L^p$ mass of $f$ scales on every small ball.
\end{example}

\begin{example}\label{ex:boundary-morrey}
Near a boundary point $x_0\in\partial\Omega$, one works on half-balls or flattened boundary patches,
\[
        B_r^+(x_0)=B_r(x_0)\cap\Omega .
\]
For Dirichlet, Neumann, or oblique derivative problems, the natural estimates are of the form
\[
        \int_{B_r^+(x_0)} |D^2u|^p\,dx
        \le
        C r^\lambda .
\]
The Morrey norm is designed for exactly this scale-local information.  A global $L^p$ estimate may hide the boundary behavior, while a Morrey estimate records the decay of the local energy near each boundary point.  This is particularly useful when the coefficients are only VMO or when the boundary is treated by flattening and comparing with model half-space problems.  In such arguments Morrey spaces provide a precise bookkeeping device for local boundary regularity.
\end{example}

\begin{example}\label{ex:navier-stokes-pressure-morrey}
For incompressible flow the pressure is recovered from the velocity by a nonlocal singular integral.  In the whole-space model,
\[
        -\Delta p=\partial_i\partial_j(u_i u_j),
\]
and hence, modulo harmless constants,
\[
        p=R_iR_j(u_i u_j).
\]
The pressure is nonlocal even when the velocity is estimated locally.  Morrey spaces are therefore a natural language for pressure estimates.  If
\[
        u\otimes u\in \mathcal M^{p,\lambda}(\R^d),
\]
then the Riesz-transform estimate gives
\[
        p\in \mathcal M^{p,\lambda}(\R^d),
        \qquad
        \|p\|_{\mathcal M^{p,\lambda}}
        \lesssim
        \|u\otimes u\|_{\mathcal M^{p,\lambda}} .
\]
For example, if a local energy or local integrability argument gives
\[
        \int_{B_r(x_0)} |u|^{2p}\,dx\le C r^\lambda,
\]
then
\[
        \int_{B_r(x_0)} |u\otimes u|^p\,dx\le C r^\lambda,
\]
so $u\otimes u\in\mathcal M^{p,\lambda}$ and consequently $p\in\mathcal M^{p,\lambda}$.  This is practically important because pressure estimates in local regularity theory are often the nonlocal bottleneck.  Morrey control lets one transfer local velocity information through the nonlocal pressure operator without pretending that the problem is globally integrable in a stronger sense.
\end{example}

\begin{example}\label{ex:parabolic-cylinders-morrey}
For a parabolic equation
\[
        \partial_t u-\Delta u=f,
\]
the natural local sets are parabolic cylinders
\[
        Q_r(t_0,x_0)=(t_0-r^2,t_0)\times B_r(x_0).
\]
A typical local datum estimate is
\[
        \int_{Q_r(t_0,x_0)} |f(t,x)|^p\,dxdt\le C r^\lambda .
\]
This is a parabolic Morrey estimate, with the parabolic scaling built into the cylinders.  Parabolic Calder\'on--Zygmund theory then yields analogous Morrey estimates for
\[
        \partial_t u,
        \qquad
        D_x^2u.
\]
In the abstract framework of the present paper, the same idea appears as time-Morrey maximal regularity.  If an operator $A$ supplies
\[
        f\in \mathcal M^{p,\lambda}(0,T;X)
        \Longrightarrow
        u',\,Au\in \mathcal M^{p,\lambda}(0,T;X),
\]
then the trace theorem converts this scale-local time information into
\[
        u\in C^{0,\theta}([0,T];X)
        \cap
        L^\infty(0,T;(X,D(A))_{\theta,\infty}),
        \qquad
        \theta=1-\frac{1-\lambda}{p}.
\]
Thus time-Morrey control is not merely another integrability assumption.  It improves the time trace exponent and identifies the weak real interpolation space forced by local-in-time scale control.
\end{example}

\begin{example}\label{ex:quasilinear-caccioppoli-morrey}
For a quasilinear elliptic problem such as
\[
        -\operatorname{div} A(x,u,\nabla u)=F(x,u,\nabla u),
\]
one typically does not begin by proving a global Calder\'on--Zygmund theorem.  Instead, the equation supplies Caccioppoli inequalities, reverse H\"older inequalities, or excess-decay estimates.  A typical scale estimate is
\[
        \int_{B_r(x_0)} |\nabla u|^p\,dx
        \le
        C r^{d-p+p\alpha}.
\]
This is precisely the Morrey-growth condition needed for Morrey's lemma.  By Poincar\'e's inequality it implies Campanato decay of the oscillation,
\[
        \int_{B_r(x_0)} |u-u_{B_r(x_0)}|^p\,dx
        \le
        C r^{d+p\alpha},
\]
and Campanato's characterization then gives
\[
        u\in C^{0,\alpha}_{\rm loc}.
\]
This example illustrates a classical reason for using Morrey spaces in PDE: they encode the exact scale decay that turns integral estimates into pointwise regularity.  A global $L^p$ statement for $\nabla u$ alone would not contain the same information.
\end{example}

\begin{example}\label{ex:localized-singular-data-morrey}
Let
\[
        f(x)=\chi_{B_1}(x)|x|^{-\beta},
        \qquad 0<\beta<\frac{d}{p}.
\]
Then near the origin
\[
        \int_{B_r(0)} |f(x)|^p\,dx
        \simeq
        r^{d-\beta p}.
\]
Hence $f$ belongs locally to $\mathcal M^{p,\lambda}$ for every
\[
        0\le\lambda\le d-\beta p.
\]
For the Poisson equation $-\Delta u=f$, the second derivatives satisfy
\[
        D^2u=R_iR_j f,
\]
and the Morrey lifting theorem gives
\[
        D^2u\in \mathcal M^{p,\lambda}_{\rm loc}
        \qquad
        (0\le\lambda\le d-\beta p).
\]
This gives a precise regularity statement adapted to the strength of the singularity.  The Morrey exponent measures how much local mass the singularity carries.  In this sense Morrey spaces provide a finer description than simply asking whether $f\in L^p$ globally.
\end{example}

\begin{remark}\label{rem:local-pde-examples-summary}
The preceding examples have the same structure:
\[
\begin{aligned}
        &\text{local PDE estimate}
        \Longrightarrow
        \text{Morrey control of the data} \\
        &\Longrightarrow
        \text{Morrey control of singular-integral quantities}
        \Longrightarrow
        \text{regularity or continuation information}.
\end{aligned}
\]
For full convolution Calder\'on--Zygmund operators, this Morrey boundedness is equivalent to the classical $L^p$ boundedness.  Nevertheless the Morrey route is often the practical route in PDE because local estimates, Caccioppoli inequalities, pressure decompositions, and parabolic cylinder bounds are naturally scale-local from the beginning.
\end{remark}

\begin{remark}\label{rem:umd-rbound-morrey-lifting}
		The Morrey lifting step above is purely local and real-variable in nature.  It uses only the $L^p$ operator norm of $T$, the kernel size estimate, H\"older's inequality, and the annular convergence forced by $\lambda<d$.  Therefore the lifting step itself does not require UMD, Fourier type, or $R$-boundedness assumptions on $X$ and $Y$.

		This should not be confused with the proof of the $L^p$ multiplier theorem.  For genuinely singular operator-valued multipliers, the $L^p$ estimate may require UMD and $R$-boundedness hypotheses, depending on the multiplier theorem used.  The conclusion of \Cref{cor:fourier-multiplier-morrey-lifting} is therefore best read as
		\[
		\text{\(L^p\) multiplier theorem}+\text{Calder\'on--Zygmund kernel size}
		\Longrightarrow
		\text{Morrey multiplier theorem}.
		\]
	\end{remark}

\section{Concluding remarks}
For translation-invariant convolution Calder\'on--Zygmund operators, one genuine Morrey bound is equivalent to the $L^p$ bound. The lifting direction uses only kernel size and the given $L^p$ estimate. The converse uses translation invariance and separated copies. Maximal-regularity multipliers and exact evolution traces are separate questions.

\section*{Acknowledgments}
During preparation of the manuscript, the first author used Gemini for language editing and the literature exposition. All mathematical statements and proofs were reviewed and verified by the authors, who take full responsibility for the content of the manuscript.

\section*{Declarations}
\noindent\textbf{Competing interests.} The authors declare no competing interests.\\
\noindent\textbf{Data availability.} No datasets were generated or analysed.

\end{document}